\newcommand{\Ac}{\mathcal{A}}
\newcommand{\Nc}{\mathcal{N}}
\newcommand{\FF}{\mathbb{F}}
\newtheorem{example}{Example}
\newtheorem{defn}{Definition}
\newtheorem{lem}{Lemma}
\newtheorem{cor}{Corollary}
\newtheorem{thm}{Theorem}
\newtheorem{pro}{Proposition}
\begin{document}

\sloppy

\title{Explicit Constructions of Quasi-Uniform Codes from Groups}


\author{
   \IEEEauthorblockN{Eldho K. Thomas and Fr\'ed\'erique Oggier}
   \IEEEauthorblockA{Division of Mathematical Sciences\\
     School of Physical and Mathematical Sciences \\
     Nanyang Technological University\\
     Singapore\\
     Email:eldho1@e.ntu.edu.sg,frederique@ntu.edu.sg}
 }


\maketitle

\begin{abstract}
 We address the question of constructing explicitly quasi-uniform codes from groups. We determine the size of the codebook, 
the alphabet and the minimum distance as a function of the corresponding group, both for abelian and some nonabelian groups. Potentials applications comprise the design of almost affine codes and non-linear network codes.
\end{abstract}

%
%

\section{Introduction}

Let $X_1, \ldots, X_n$ be a collection of $n$ jointly distributed discrete random variables over some alphabet of size $N$.
We denote by $\Ac$ a subset of indices from $\Nc = \{1,\ldots ,n\}$, and $X_\Ac=\{X_i,~i\in\Ac\}$. We call the support of $X_\Ac$ $\lambda(X_\Ac)=\{x_\Ac: Pr(X_\Ac=x_\Ac)>0\}$.
\begin{defn}
A probability distribution over a set of $n$ random variables $X_1,\ldots,X_n$ is said to be {\it quasi-uniform} if for any $\Ac\subseteq\Nc$, $X_\Ac$ is uniformly distributed over its support $\lambda(X_\Ac)$:
\[
P(X_\Ac=x_\Ac)=
\left\{
\begin{array}{ll}
1/|\lambda(X_\Ac)| & \mbox{if }x_\Ac\in\lambda(X_\Ac), \\
0 & \mbox{otherwise}.
\end{array}
\right.
\]
\end{defn}
The motivation for introducing quasi-uniform random variables~\cite{C} is that they possess a non-asymptotic equipartition property, by analogy to the asymptotic equipartition property, where long typical sequences have total probability close to 1, and are approximately uniformly distributed. 

We call a {\it code $C$ of length $n$} an arbitrary nonempty subset of $\mathcal{X}_1\times \cdots \times \mathcal{X}_n$ where $\mathcal{X}_i$ is the alphabet for the $i$th codeword symbol, and each $\mathcal{X}_i$ might be different.

We can associate to every code $C$ a set of random variables~\cite{CGB} by treating each codeword $(X_1,\ldots,X_n)\in C$ as a random vector with probability
\[
P(X_\Nc=x_\Nc)=
\left\{
\begin{array}{ll}
1/|C| & \mbox{if }x_\Nc\in C, \\
0 & \mbox{otherwise}.
\end{array}
\right.
\]
To the $i$th codeword symbol then corresponds a {\it codeword symbol random variable} $X_i$ induced by $C$.

\begin{defn}~\cite{CGB}
A code $C$ is said to be quasi-uniform if
the induced codeword symbol random variables are quasi-uniform.
\end{defn}
Given a code, we explained above how to associate a set of random variables, which might or not end up being quasi-uniform. Conversely, given a set of quasi-uniform random variables, a quasi-uniform code is obtained as follows.
Let $X_1, \ldots, X_n$ be a set of quasi-uniform random variables with probabilities $Pr(X_\Ac=x_\Ac)= 1/|\lambda(X_\Ac)|$ for all $\Ac \subseteq \Nc$. The corresponding quasi-uniform code $C$ of length $n$ is  given by $C = \lambda(X_\Nc)=\{x_\Nc= Pr(X_\Nc=x_\Nc)>0\}$.
Quasi-uniform codes were defined in~\cite{CGB}, where some of their properties were discussed, and importantly their weight enumerator polynomial was computed.

For a linear $(n,k)$ code $C$ of dimension $k$ and length $n$ (over some finite field), the weight enumerator polynomial of $C$ is defined as
$$W_C(x,y)= \sum_{c\in C}x^{n-wt(c)}y^{wt(c)},$$
where $wt(c)$ is the weight of $c$, that is the number of non-zero coefficients of $c$.
For arbitrary codes, rather than the weight of the codewords, the distance between two codewords is of interest. Let $A_r(c)=|\{c' \in C,~|\{ j \in\Nc,~c_j \neq c'_j\}|=r\}|$ be the distance profile of $C$ centered at $c$. Note that we avoid defining $A_r$ using $wt(c-c')$, since this already assumes that the difference of two codewords makes sense. It was shown in~\cite{CGB} that quasi-uniform codes are distance-invariant, meaning that the distance profile does not depend on the choice of $c$. 
\begin{thm}~\cite{CGB}
Let $C$ be a quasi-uniform code of length $n$. Then its weight enumerator $W_C(x,y)=\sum_{j=0}^nA_jx^{n-j}y^j$ is given by
\begin{equation}\label{thm:wt}
W_C(x,y)=\sum_{\Ac\subseteq\Nc}q^{H(X_{\Nc})-H(X_\Ac)}(x-y)^{|\Ac|}y^{n-|\Ac|},
\end{equation}
where $H(X_\Ac)= \log_q(|\lambda(X_\Ac)|)$ is the joint entropy of the induced codeword symbol  quasi-uniform random variables.
\end{thm}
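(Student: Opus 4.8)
The plan is to compute the distance profile $A_j$ directly and show it equals the coefficient of $x^{n-j}y^j$ in the claimed expression. Since the code is distance-invariant (a fact established in~\cite{CGB}), I may fix any single codeword $c$ and count $A_j = |\{c' \in C : c'$ disagrees with $c$ in exactly $j$ coordinates$\}|$, without worrying about the choice of $c$. The natural route is to count instead, for each subset $\Ac \subseteq \Nc$, the number of codewords $c'$ that \emph{agree} with $c$ on all coordinates in $\Ac$ (and are unconstrained elsewhere), and then pass to the ``disagree in exactly $j$ places'' counts by inclusion--exclusion.

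First I would make precise the agreement count. For a fixed $c$ and a subset $\Ac$, let $N_\Ac$ be the number of codewords $c'$ that agree with $c$ on $\Ac$. Because $C$ is quasi-uniform, $X_\Ac$ is uniform over its support $\lambda(X_\Ac)$ of size $q^{H(X_\Ac)}$, and each value of $X_\Ac$ is the restriction of equally many full codewords; the total number of codewords is $|C| = |\lambda(X_\Nc)| = q^{H(X_\Nc)}$. Hence the number of $c'$ agreeing with $c$ on $\Ac$ is $|C|/|\lambda(X_\Ac)| = q^{H(X_\Nc) - H(X_\Ac)}$. This is the key structural identity, and it is exactly the exponent appearing in~(\ref{thm:wt}); I would state and justify it as a lemma first.

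Next I would run the combinatorial bookkeeping. Introduce, for each codeword $c'$, its agreement set with $c$, and expand. The generating-function way to package this is to write, for each coordinate $j$, a factor that is $y$ when $c'_j \neq c_j$ (a ``disagreement'' contributing to the weight) and $x$ when $c'_j = c_j$; summing $\prod_j (\cdot)$ over all $c' \in C$ gives $W_C(x,y) = \sum_{c'} x^{(\#\text{agreements})} y^{(\#\text{disagreements})}$, which matches $\sum_j A_j x^{n-j} y^j$ by definition of $A_j$. The point is then to reorganize this sum by first choosing the agreement set. Writing $x = y + (x-y)$ in each agreement factor and expanding, the coefficient counting reduces to summing $N_\Ac (x-y)^{|\Ac|} y^{n-|\Ac|}$ over all $\Ac$, where $N_\Ac$ counts codewords agreeing with $c$ on at least $\Ac$; substituting $N_\Ac = q^{H(X_\Nc)-H(X_\Ac)}$ yields~(\ref{thm:wt}) directly.

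I expect the main obstacle to be the inclusion--exclusion / generating-function reindexing rather than the entropy identity. Concretely, one must be careful that ``agree on $\Ac$'' is an ``at least'' condition (agreement on $\Ac$, free elsewhere), so that the substitution $x \mapsto y + (x-y)$ correctly converts these over-counting sums into the exact ``disagree in exactly $j$'' profile; verifying that the binomial expansion of the agreement factors telescopes cleanly into $(x-y)^{|\Ac|} y^{n-|\Ac|}$ with coefficient $N_\Ac$, with no residual terms, is where a sign or indexing slip is easy to make. I would therefore carry out the expansion symbolically over all $2^n$ subsets and match coefficients of $(x-y)^{|\Ac|}y^{n-|\Ac|}$ term by term, rather than trying to collapse to the $A_j$ form prematurely.
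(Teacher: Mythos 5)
Your proof is correct, but there is nothing in this paper to compare it against: the theorem is quoted from \cite{CGB} and stated without proof, so your argument must stand on its own merits --- and it does. The key lemma is exactly right: quasi-uniformity says each value $x_\Ac\in\lambda(X_\Ac)$ has probability $1/|\lambda(X_\Ac)|$, and since $X_\Nc$ is uniform over the $|C|=q^{H(X_\Nc)}$ codewords, each such value is the restriction of exactly $|C|/|\lambda(X_\Ac)|=q^{H(X_\Nc)-H(X_\Ac)}$ codewords; applying this to the value $c_\Ac$ (which lies in $\lambda(X_\Ac)$ precisely because $c\in C$) gives $N_\Ac=q^{H(X_\Nc)-H(X_\Ac)}$, independently of $c$. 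The reindexing step you were worried about also goes through with no residue: for $c'$ with agreement set $S(c')$, expanding each agreement factor as $x=(x-y)+y$ gives
\[
x^{|S(c')|}y^{n-|S(c')|}=\sum_{\Ac\subseteq S(c')}(x-y)^{|\Ac|}y^{n-|\Ac|},
\]
and summing over $c'\in C$ and exchanging the two sums yields $\sum_{\Ac\subseteq\Nc}N_\Ac(x-y)^{|\Ac|}y^{n-|\Ac|}$; no signs ever appear because the inclusion--exclusion is entirely absorbed into this binomial identity, so the ``at least $\Ac$'' counts $N_\Ac$ are exactly the right quantities to plug in. One simplification worth noting: you do not need to import distance-invariance from \cite{CGB} as an external fact. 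Since your $N_\Ac$ is independent of the choice of $c$, and the exact-disagreement counts $A_j$ are determined from the $N_\Ac$ by reading off coefficients in the identity above, distance-invariance of quasi-uniform codes falls out as a corollary of your lemma rather than serving as an input to it.
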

The formula for the weight enumerator shows that it only depends $H(X_\Ac)$. In fact,~\cite{CGB} which introduced quasi-uniform codes focused on their information theoretic properties, rather than on their coding properties. The goal of this paper is to address the construction and understanding of such quasi-uniform codes from a constructive point of view. We will use the group theoretic approach proposed in \cite{chan,T} for constructing quasi-uniform random variables from finite groups. 

More precisely, in Section \ref{sec:group}, we recall the construction of quasi-uniform codes from groups, and compute the size of the corresponding code as a function of the group $G$ we started with. 
We then consider abelian groups in Section \ref{sec:ab}, and compute the alphabet as well as the minimum distance as a function of $G$.
We next move to nonabelian groups in Section \ref{sec:nonab}. The structure of group, even though it is a nonabelian one, allows in some cases to mimic a definition for the minimum distance of the code. 
Potential applications to the design of almost affine codes is mentioned in Section \ref{sec:aa}.

Looking at constructions of codes from groups is motivated by the need to design non-linear codes for network coding (see \cite{CG} for applications of quasi-uniform codes to network coding), apart from designing almost affine codes as mentioned above. 

%
%

\section{Quasi-Uniform Codes from Groups}
\label{sec:group}

Let $G$ be a finite group of order $|G|$ with $n$ subgroups $G_1,\ldots,G_n$, and $G_\Ac=\cap_{i\in\Ac}G_i$. Given a subgroup $G_i$ of $G$, the (left) coset of $G_i$ in $G$ is defined by $gG_i=\{gh,~h\in G_i\}$.  
The number of (left) cosets of $G_i$ in $G$ is called the index of $G_i$ in $G$ and is denoted by
$[G:G_i]$. It is known from Lagrange Theorem that $[G:G_i]=|G|/|G_i|$.
If $G_i$ is normal, the sets of cosets $G/G_i:=\{gG_i,~g\in G\}$ are themselves groups, called quotient groups.

Let $X$ be a random variable uniformly distributed over $G$, that is $P(X=g)=1/|G|$, for any $g\in G$.
Define the new random variable $X_i=XG_i$, with support the $[G:G_i]$ cosets of $G_i$ in $G$. Then
$P(X_i=gG_i)=|G_i|/|G|$ and $P(X_i=gG_i,~i\in\Ac)=|\cap_{i\in\Ac}G_i|/|G|$.

This shows that quasi-uniform random variables may be obtained from finite groups. More precisely:
\begin{thm}\cite{chan,T}
\label{thm:one}
For any finite group $G$ and any subgroups $G_1, \ldots, G_n$ of $G$, there exist $n$ jointly distributed quasi-uniform discrete random variables $X_1,\ldots, X_n$ such that for all non-empty subsets $\Ac$ of $\Nc$, $Pr(X_\Ac=x_\Ac) = |G_\Ac| / |G |$. 
\end{thm}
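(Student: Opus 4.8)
The plan is to exhibit the random variables explicitly, exactly as set up in the paragraph preceding the statement, and then verify the two properties that together constitute the claim: that the joint probability equals $|G_\Ac|/|G|$ on its support, and that this value is constant across the support, which is precisely quasi-uniformity. First I would take $X$ uniform on $G$ with $Pr(X=g)=1/|G|$ and set $X_i=XG_i$, so that $X_i$ records which left coset of $G_i$ the element $X$ falls into. A value $x_\Ac=(g_iG_i)_{i\in\Ac}$ of the tuple $X_\Ac$ then corresponds to the event $\{X\in\bigcap_{i\in\Ac}g_iG_i\}$, and since $X$ is uniform we immediately get
\[
Pr(X_\Ac=x_\Ac)=\frac{\left|\bigcap_{i\in\Ac}g_iG_i\right|}{|G|}.
\]
Thus everything reduces to computing the cardinality of an intersection of left cosets.

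The key step — and the one I expect to be the main obstacle — is the following coset lemma: if $\bigcap_{i\in\Ac}g_iG_i$ is nonempty, then it is a single left coset of $G_\Ac=\bigcap_{i\in\Ac}G_i$, and in particular has exactly $|G_\Ac|$ elements. To prove it, I would pick any $a$ in the intersection; then $a\in g_iG_i$ forces $g_iG_i=aG_i$ for every $i\in\Ac$, so the intersection equals $\bigcap_{i\in\Ac}aG_i$. I would then establish $\bigcap_{i\in\Ac}aG_i=aG_\Ac$ via the equivalences $x\in\bigcap_i aG_i \Leftrightarrow a^{-1}x\in G_i \text{ for all } i \Leftrightarrow a^{-1}x\in G_\Ac \Leftrightarrow x\in aG_\Ac$. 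Since every left coset of $G_\Ac$ has the same cardinality as $G_\Ac$, this yields $\left|\bigcap_{i\in\Ac}g_iG_i\right|=|G_\Ac|$ whenever the intersection is nonempty.

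Combining the two, for every $x_\Ac$ in the support of $X_\Ac$ (that is, with nonempty coset intersection) we obtain $Pr(X_\Ac=x_\Ac)=|G_\Ac|/|G|$, independently of the particular cosets chosen. This constancy over the support is exactly the assertion that $X_\Ac$ is uniform on $\lambda(X_\Ac)$, and since $\Ac$ was an arbitrary nonempty subset of $\Nc$, the whole family $X_1,\dots,X_n$ is quasi-uniform with the claimed probabilities.

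As a consistency check I would note that two elements $g,g'$ of $G$ give the same tuple $(gG_i)_{i\in\Ac}$ precisely when they lie in the same coset of $G_\Ac$, so $|\lambda(X_\Ac)|=[G:G_\Ac]=|G|/|G_\Ac|$; the probabilities then sum to $1$, and $1/|\lambda(X_\Ac)|=|G_\Ac|/|G|$ matches the value demanded by the definition of a quasi-uniform distribution. The genuine content of the argument is thus the coset lemma; the rest is bookkeeping on the uniform measure on $G$.
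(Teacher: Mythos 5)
Your proposal is correct and follows exactly the construction the paper uses (a uniform random variable $X$ on $G$ with $X_i=XG_i$); the paper merely asserts $P(X_i=gG_i,\,i\in\Ac)=|G_\Ac|/|G|$ and cites the references, whereas you supply the key coset lemma --- that a nonempty intersection $\bigcap_{i\in\Ac}g_iG_i$ is a single left coset of $G_\Ac$ --- which is precisely the detail needed to justify that assertion. Same approach, with the implicit step made rigorous.
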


Quasi-uniform codes are obtained from these quasi-uniform distributions by taking the support $\lambda(X_\Nc)$, 
as explained in the introduction. Codewords (of length $n$) can then be described explicitly by letting the random variable $X$ take every possible values in the group $G$, and by computing the corresponding cosets as follows:
\begin{center}
\begin{tabular}{c|c|c|c|}
           & $G_1$        & $\hdots$ &  $G_n$ \\
\hline
$g_1$      & $g_1G_1$     &          & $g_1G_n$ \\
$g_2$      &  $g_2G_1$    &          & $g_2G_n$ \\
$\vdots$   &   $\vdots$        &          &  $\vdots$       \\
$g_{|G|}$  & $g_{|G|}G_1$ &$\hdots$ & $g_{|G|}G_n$ \\
\hline
\end{tabular}
\end{center}
Each row corresponds to one codeword of length $n$.
The cardinality $|C|$ of the code obtained seems to be $|G|$, but in fact, it depends on the subgroups $G_1,\ldots,G_n$. Indeed, it could be that the above table yields several copies of the same code.

\begin{lem}\label{lem:sizeC}
Let $C$ be a quasi-uniform code obtained from a group $G$ and subgroups $G_1,\ldots,G_n$. 
Then $|C|=|G|/|G_\Nc|$. In particular, if $|G_\Nc|=1$, then $|C|=|G|$.
\end{lem}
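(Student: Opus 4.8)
The plan is to realize $C$ as the image of a single map out of $G$ and then count its fibers. Define $\phi : G \to \prod_{i=1}^n \{gG_i : g \in G\}$ by $\phi(g) = (gG_1, \ldots, gG_n)$. By construction, the row of the table indexed by $g \in G$ is precisely $\phi(g)$, and the code $C$ consists of the \emph{distinct} rows, so $C = \phi(G)$. Computing $|C|$ thus reduces to counting the number of distinct values taken by $\phi$, equivalently the number of fibers of $\phi$.

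The key step is to identify these fibers. First I would record the standard coset-equality criterion, valid for \emph{any} subgroup $G_i$ and not only for normal ones: for $g, g' \in G$ one has $gG_i = g'G_i$ if and only if $g^{-1}g' \in G_i$. Applying this coordinatewise, $\phi(g) = \phi(g')$ holds precisely when $g^{-1}g' \in G_i$ for every $i \in \Nc$, that is, when $g^{-1}g' \in \bigcap_{i \in \Nc} G_i = G_\Nc$. Hence $\phi(g) = \phi(g')$ if and only if $g' \in gG_\Nc$, so the fibers of $\phi$ are exactly the left cosets of $G_\Nc$ in $G$.

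It then follows that the number of distinct codewords equals the number of left cosets of $G_\Nc$, namely the index $[G:G_\Nc]$. Since $G_\Nc$ is an intersection of subgroups it is itself a subgroup, so Lagrange's Theorem (recalled above) applies and gives $[G:G_\Nc] = |G|/|G_\Nc|$, whence $|C| = |G|/|G_\Nc|$. The special case $|G_\Nc| = 1$ immediately yields $|C| = |G|$.

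I do not anticipate a genuine obstacle, as the argument is essentially a fiber-counting exercise. The one point deserving care is that both the coset-equality criterion and the partition of $G$ into cosets of $G_\Nc$ require no normality hypothesis on the individual $G_i$; all that is needed is that $G_\Nc$ is a subgroup so that Lagrange applies. I would state this explicitly to avoid any implicit appeal to quotient-group structure, which is relevant since the $G_i$ here are arbitrary subgroups.
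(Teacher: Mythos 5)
Your proof is correct and rests on the same underlying idea as the paper's: codewords are constant on left cosets of $G_\Nc$, so counting distinct rows reduces to counting cosets, i.e.\ the index $[G:G_\Nc]$. In fact your write-up is more complete than the paper's own proof. The paper only verifies one direction --- that $g$ and $gh$ with $h\in G_\Nc$ produce identical rows, since $h\in G_j$ for every $j$ --- and then concludes; on its own this establishes only the upper bound $|C|\leq |G|/|G_\Nc|$, leaving implicit that elements in \emph{different} cosets of $G_\Nc$ give \emph{different} rows. Your fiber-counting formulation supplies exactly this converse via the coset-equality criterion: $\phi(g)=\phi(g')$ forces $g^{-1}g'\in G_i$ for every $i\in\Nc$, hence $g^{-1}g'\in G_\Nc$, so the fibers of $\phi$ are precisely the cosets of $G_\Nc$ and the matching lower bound follows. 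Your closing observation that no normality of the individual $G_i$ is needed is also correct and is relied upon later in the paper, where the lemma is applied to nonnormal subgroups of nonabelian groups.
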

\begin{proof}
Let $G_\Nc=\{h_1,\ldots,h_m\}$ be the intersection of all the subgroups $G_1,\ldots,G_n$. There 
are $|G|/|G_\Nc|$ cosets of $G_\Nc$ in $G$. Let us compute a first coset, say $g_1G_\Nc=\{g_1,g_1h_2,\ldots,g_1h_m\}$, by assuming wlog that $h_1$ is the identity element of $G_\Nc$. In words, we observe that every element in $g_1G_\Nc$ is a multiple of a non-trivial element of $G_\Nc$. Thus, when computing the above table, we have (by reordering the elements of $G$ so as to list first the elements in $g_1G_\Nc$):
\begin{center}
\begin{tabular}{c|c|c|c|}
          & $G_1$              & $\hdots$ & $G_n$ \\
\hline
$g_1$     & $g_1G_1$           &          & $g_1G_n$ \\
$g_1h_2$  & $g_1h_2G_1=g_1G_1$ &          & $g_1h_2G_n=g_1G_n$ \\
$\vdots$  &   $\vdots$         &          &  $\vdots$       \\
$g_1h_m$  & $g_1h_mG_1=g_1G_1$ &$\hdots$  & $g_1h_mG_n=g_1G_n$ \\
 $\vdots$ &   $\vdots$         &          & $\vdots$  \\           
\hline
\end{tabular}
\end{center}
where $g_1h_iG_j=g_1G_j$, for all $i$, $j=1,\ldots,n$ because by definition of $G_\Nc$, 
$h_i\in G_j$. Since the cosets of $G_\Nc$ partition $G$, we will get $|G|/|G_\Nc$ copies of a code $C$.
\end{proof}

One of the motivations to consider quasi-uniform codes is that they allow to go beyond abelian structures. Nevertheless, we will start by considering the case of abelian groups, which is easier to handle.

%
%
\section{Quasi-Uniform Codes from Abelian Groups}
\label{sec:ab}

Suppose that $G$ is an abelian group, with subgroups $G_1,\ldots,G_n$. The procedure from Section \ref{sec:group} explains how to obtain a quasi-uniform distribution of $n$ random variables, and thus a quasi-uniform code of length $n$ from $G$. To avoid getting several copies of the same code, as exposed in Lemma \ref{lem:sizeC}, notice that since $G$ is abelian, all the subgroups $G_1,\ldots,G_n$ are normal, and thus so is $G_\Nc$. If $|G_\Nc|>1$, we consider instead of $G$ the quotient group $G/G_\Nc$, and we can thus assume wlog that $|G_\Nc|=1$.

\begin{lem}
The size of the code alphabet is $\sum_{i=1}^n[G:G_i]$.
\end{lem}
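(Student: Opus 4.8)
The plan is to identify, for each position $i$, the alphabet $\mathcal{X}_i$ explicitly as the set of left cosets of $G_i$ in $G$, count it via Lagrange's theorem, and then sum over the $n$ positions. First I would observe that, reading off the $i$th column of the codeword table from Section~\ref{sec:group}, the symbol in position $i$ of the codeword indexed by $g$ is exactly the coset $gG_i$. Hence $\mathcal{X}_i$ is contained in the set $G/G_i$ of left cosets of $G_i$. Since the underlying random variable $X$ ranges over every element of $G$, every coset $gG_i$ is attained by at least one codeword, so in fact $\mathcal{X}_i$ equals the full set of left cosets, and $|\mathcal{X}_i| = [G:G_i] = |G|/|G_i|$ by Lagrange's theorem.

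The second step is to combine the per-position counts. Since a code of length $n$ is by definition a subset of $\mathcal{X}_1 \times \cdots \times \mathcal{X}_n$, its code alphabet is the disjoint union $\mathcal{X}_1 \sqcup \cdots \sqcup \mathcal{X}_n$, each position contributing its own symbol set. Therefore the total alphabet size is $\sum_{i=1}^n |\mathcal{X}_i| = \sum_{i=1}^n [G:G_i]$, which is the claimed formula.

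The main point to be careful about --- and essentially the only place where the argument could go wrong --- is the disjointness of the per-position alphabets. A coset $gG_i$ is itself a subset of $G$, so if two of the subgroups coincide, say $G_i = G_j$, then their coset sets coincide as collections of subsets of $G$, and a naive union over positions would undercount. The statement is thus to be read with the alphabets at distinct positions treated as formally distinct symbol sets, i.e.\ as a disjoint union; this is precisely the convention already implicit in defining a length-$n$ code as a subset of the product $\mathcal{X}_1 \times \cdots \times \mathcal{X}_n$, where each $\mathcal{X}_i$ may differ. Under this convention no overlap arises and the sum is exact. I do not expect any genuine computational difficulty here: the entire content lies in fixing the right notion of code alphabet and in the elementary Lagrange count, both of which are immediate once the column structure of the codeword table is made explicit.
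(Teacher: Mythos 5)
Your per-column count matches the paper's first step: each column of the table contains exactly $[G:G_i]$ distinct cosets, all of which occur because $X$ ranges over all of $G$. The divergence is in how the columns are combined, and here there is a genuine gap. The paper does not treat the alphabet as a formal disjoint union: the symbols are the cosets themselves, i.e.\ subsets of $G$, and ``the code alphabet'' is the set of all cosets appearing anywhere in the table. Under that reading the lemma has real content, namely that a coset from column $i$ can never coincide, as a set, with a coset from column $j\neq i$. Note that this is not only an issue when $G_i=G_j$ (the one case you flagged and then dismissed by convention): even for \emph{distinct} subgroups of the same order one must rule out that some coset $gG_i$ accidentally equals some coset $g'G_j$. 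That exclusion is exactly what the paper proves: if $gG_i=g'G_j$, then $G_i=g^{-1}g'G_j$, so the coset $g^{-1}g'G_j$ is a subgroup, which forces $g^{-1}g'\in G_j$ and hence $G_i=G_j$; contrapositively, distinct subgroups have disjoint coset collections (cosets of subgroups of different orders are already distinguished by cardinality). Your proposal never confronts this step, replacing it with the stipulation that the $\mathcal{X}_i$ are ``formally distinct'' symbol sets, which makes the statement true by definition but empties it of content.

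That stipulation is also inconsistent with how the paper uses the lemma. Immediately afterwards it observes that ``the size of the alphabet can often be reduced'' by relabelling each $G/G_i$ through an isomorphism $\psi_i : G/G_i \to H_i$ --- in the worked example with $C_3\times C_3$, all four positions end up over the same set $\{0,1,2\}$, shrinking the alphabet from $\sum_i [G:G_i]=12$ symbols to $3$. Such a reduction is only meaningful if symbols at different positions are compared as actual objects; under your disjoint-union convention no relabelling could ever change the alphabet size. So the missing idea is the (elementary but genuine) group-theoretic fact that a left coset of $G_i$ can equal a left coset of $G_j$ only if $G_i=G_j$; with that in hand, and the implicit assumption that the $G_i$ are pairwise distinct, the union of the $n$ columns' coset sets is genuinely disjoint and the count $\sum_{i=1}^n [G:G_i]$ follows.
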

\begin{IEEEproof}
It is enough to show that all the cosets that appear in the table are distinct.
By definition, every column contains $[G:G_i]$ distinct cosets. If $|G_i|\neq |G_j|$, the respective cosets 
will have different sizes, so let us assume that $|G_i|=|G_j|$. 
If $gG_i=g'G_j$, then $g^{-1}gG_i= G_i = g^{-1}g'G_j$ and it must be that $g^{-1}g'G_j $ is a subgroup. 
This implies that $g^{-1}g' \in G_j$, and thus that $g^{-1}g'G_j=G_j$.
\end{IEEEproof}
The size of the alphabet can often be reduced, as explained next.
Let $\pi_i$ denote the canonical projection $\pi_i:G\rightarrow G/G_i$. 
Since $G/G_i$ is itself an abelian group, let us denote explicitly by $\psi_i: G/G_i \rightarrow H_i$ this group isomorphism. Then $\pi_i(g)=gG_i \mapsto h \in H_i$ via $\psi_i(gG_i)=h$, $i=1,\ldots,n$.

\begin{pro}
\label{lem:two}
Let $G$ be an abelian group with subgroups $G_1,\ldots,G_n$. Then its corresponding quasi-uniform code is defined
over $H_1 \times \cdots \times H_n$. 
\end{pro}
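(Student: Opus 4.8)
The plan is to show that applying the isomorphisms $\psi_i$ coordinate by coordinate merely relabels the alphabet of each position, and hence transports the code $C$ faithfully into $H_1 \times \cdots \times H_n$. First I would recall from Section~\ref{sec:group} that the codeword attached to $g \in G$ is the tuple of cosets $(gG_1, \ldots, gG_n)$, so that the $i$th symbol ranges over the quotient $G/G_i$. Because $G$ is abelian, every $G_i$ is normal, so $G/G_i$ is a well-defined abelian group and the projection $\pi_i : G \rightarrow G/G_i$ is a surjective homomorphism; consequently the isomorphism $\psi_i : G/G_i \rightarrow H_i$ exists and is, in particular, a bijection.

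Next I would form the product map $\Psi = \psi_1 \times \cdots \times \psi_n$, sending $(gG_1, \ldots, gG_n) \mapsto (\psi_1(gG_1), \ldots, \psi_n(gG_n)) \in H_1 \times \cdots \times H_n$. The key point is that each $\psi_i$ is a bijection, so within position $i$ we have $gG_i = g'G_i$ if and only if $\psi_i(gG_i) = \psi_i(g'G_i)$. Thus $\Psi$ relabels the symbols of each coordinate without identifying or splitting any of them, and therefore restricts to a bijection between $C$ and its image $\Psi(C) \subseteq H_1 \times \cdots \times H_n$. In particular, two codewords of $C$ differ in exactly the same set of positions as their images under $\Psi$, so the Hamming geometry---and with it the distance profile and the weight enumerator~\eqref{thm:wt}---is preserved. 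This already establishes that the code is defined over $H_1 \times \cdots \times H_n$.

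I would then note the additional structure that this viewpoint reveals. Each composite $\phi_i := \psi_i \circ \pi_i : G \rightarrow H_i$ is a surjective group homomorphism, and the code is precisely the image of the diagonal homomorphism $\Phi : G \rightarrow H_1 \times \cdots \times H_n$, $g \mapsto (\phi_1(g), \ldots, \phi_n(g))$. Hence $\Psi(C)$ is a subgroup of $H_1 \times \cdots \times H_n$; its kernel is $\cap_i \ker \phi_i = \cap_i \ker \pi_i = \cap_i G_i = G_\Nc$ (using that each $\psi_i$ is injective), so that $|\Psi(C)| = |G|/|G_\Nc|$, consistent with Lemma~\ref{lem:sizeC}.

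I do not expect a genuine obstacle here: the whole content is the observation that a coordinate-wise bijection of alphabets leaves a code unchanged as a combinatorial object. The only points requiring care are (i) checking that each $\psi_i$ is well defined on cosets, which is guaranteed by the normality of $G_i$ in the abelian group $G$, and (ii) being explicit that only the bijectivity of $\psi_i$---not its homomorphism property---is needed for the alphabet claim itself, the homomorphism property being what additionally endows $\Psi(C)$ with its subgroup structure.
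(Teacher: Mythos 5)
Your proof is correct, and its core idea coincides with the paper's: compose the canonical projection $\pi_i$ with the isomorphism $\psi_i$ and exploit that $\psi_i$ is a bijection. The difference is the level at which the argument runs. The paper works on the probabilistic side: it defines new random variables $Z_i=\psi_i(\pi_i(X))$ taking values in $H_i$ and verifies that the joint distribution is unchanged, $Pr(Z_i=h_i, i\in\Ac)=|G_\Ac|/|G|$, which directly certifies the defining property of a quasi-uniform code (that property is stated in terms of the induced random variables). You instead work on the combinatorial side: the coordinate-wise bijection $\Psi$ carries the set of codewords faithfully into $H_1\times\cdots\times H_n$, preserving which positions agree and which differ. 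The two formulations are equivalent here, since a bijective relabeling of each coordinate's alphabet preserves both the supports $\lambda(X_\Ac)$ and uniformity over them --- but if one is being strict, your write-up should say that one line explicitly, because ``the code is defined over $H_1\times\cdots\times H_n$'' formally means the relabeled induced random variables are still quasi-uniform, not merely that the Hamming geometry survives. Your closing observation, that the code is the image of the diagonal homomorphism $g\mapsto(\psi_1(\pi_1(g)),\ldots,\psi_n(\pi_n(g)))$ with kernel $G_\Nc$, is not part of the paper's proof of this proposition, but it is exactly the mechanism behind Corollary~\ref{cor:abelian} (the code is an abelian group) and recovers Lemma~\ref{lem:sizeC}; stating it here is a reasonable consolidation rather than a detour.
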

\begin{IEEEproof}
Let $X$ be again this random variable defined over $G$ by $Pr(X=g)=1/|G|$.
Define a new random variable $Z_i$ by $Z_i=\psi_i(\pi_i(X))$ which takes values directly in $H_i$.
Then $Pr(Z_i=h)=Pr(\psi_i(\pi_i(X))=h)=Pr(\pi_i(X)=gG_i)=|G_i|/|G|.$
Similarly, if $Pr(Z_i=h_i,i\in \Ac) >0$, then
$Pr(Z_i=h_i:i\in \Ac) = Pr(\psi_i(\pi_i(X))=h_i:i \in \Ac) = Pr( \pi_i(X) = gG_i: i \in \Ac)  =  |G_\Ac|/|G|.$
\end{IEEEproof}

In other words, we get a labeling of the cosets which respects the group structures componentwise. The next result then follows naturally.

\begin{cor}
\label{cor:abelian}
A quasi-uniform code $C$ obtained from an abelian group is itself an abelian group.
\end{cor}
\begin{IEEEproof}
First notice that the zero codeword is in $C$, since the codeword corresponding to the identity element in $G$ is 
$(\psi_1(\pi_1(G_1)),\ldots,\psi_n(\pi_n(G_n)))=(0,\ldots,0)$ where each $0$ corresponds to the identity element in each abelian group $H_i$.
Let $(\psi_1(\pi_1(gG_1)),\ldots,\psi_n(\pi_n(gG_n)))$ and $(\psi_1(\pi_1(g'G_1)),\ldots,\psi_n(\pi_n(g'G_n)))$ be two codewords in $C$. Then note that the codeword in $C$ corresponding to the element $g+g' \in G$ is 
$(\psi_1(\pi_1((g+g')G_1)),\ldots,\psi_n(\pi_n((g+g')G_n)))$ where $\psi_i(\pi_i(g+g'))=\psi_i(\pi_i(g))+\psi_i(\pi_i(g'))$, $i=1,\ldots,n$. Every codeword has an additive inverse for the same reason. It forms an abelian group because every group law componentwise is commutative.
\end{IEEEproof}

Because every $H_i$ is an abelian group, we can freely use $0$ since it corresponds to the identity element of $H_i$, as well as the operation $+$ and $-$, since $+$ is the group law for $H_i$, and $-$ is the additive inverse. However, the alphabet $H_i$ is possibly any abelian group, in particular, different abelian groups might be used for different components of the codewords. The classification of abelian groups tells us that each $H_i$ can
be expressed as the direct sum of cyclic subgroups of order a prime power. In the particular case where we have only one cyclic group, then (1) the group $C_{p^r}$ is isomorphic to the integers mod $p^r$, and (2) the group $C_p$ is isomorphic to the integers mod $p$, which in fact has a field structure, and we deal with the usual finite field $\FF_p$. If all the subgroups $G_1,\ldots,G_n$ have index $p$, then we get an $(n,k)$ linear code over $\FF_p$.

The minimum distance of an abelian quasi-uniform code is encoded in its weight enumerator, but is however not easily read from (\ref{thm:wt}). We can easily express it in terms of the subgroups $G_1,\ldots,G_n$.

\begin{lem}\label{lem:dmin}
The minimum distance $\min_{c\in C}wt(c)$ of a quasi-uniform code $C$ generated by an abelian group $G$ and its subgroups $G_i$, $i=1,\ldots n$ is $n-\max_{\Ac\in \Nc,G_\Ac \neq \{0\}}|\Ac|$.
\end{lem}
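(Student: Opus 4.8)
The plan is to exploit the group structure from Corollary \ref{cor:abelian}: since $C$ is an abelian group, the distance between two codewords $c,c'$ is the weight of the codeword $c-c'\in C$, so the minimum distance of $C$ coincides with the minimum weight of a nonzero codeword, $\min_{c\in C,\,c\neq 0}wt(c)$. It therefore suffices to analyze the weights of individual codewords, and to do so I would first pin down when a single coordinate of a codeword vanishes.

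By Proposition \ref{lem:two} the codeword associated with $g\in G$ is $(\psi_1(\pi_1(gG_1)),\ldots,\psi_n(\pi_n(gG_n)))$, and its $i$th entry is the identity $0\in H_i$ precisely when $\pi_i(g)=gG_i$ is the trivial coset, i.e.\ exactly when $g\in G_i$. Consequently the weight of this codeword is
\[
wt(c_g)=|\{i\in\Nc:g\notin G_i\}|=n-|\{i\in\Nc:g\in G_i\}|.
\]
Since we have reduced to the case $|G_\Nc|=1$, the nonzero codewords are exactly those indexed by $g\neq 0$, so minimizing $wt(c_g)$ over nonzero codewords is equivalent to maximizing the count $|\{i:g\in G_i\}|$ over nonzero $g$.

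The central step is to match this maximization with the one in the statement. For a fixed nonzero $g$, set $\Ac_g=\{i:g\in G_i\}$; then $g\in\cap_{i\in\Ac_g}G_i=G_{\Ac_g}$, so $G_{\Ac_g}\neq\{0\}$ and hence $|\Ac_g|\le\max_{\Ac\subseteq\Nc,\,G_\Ac\neq\{0\}}|\Ac|$. Conversely, given any $\Ac$ with $G_\Ac\neq\{0\}$, choosing a nonzero $g\in G_\Ac$ gives $g\in G_i$ for every $i\in\Ac$, whence $|\{i:g\in G_i\}|\ge|\Ac|$. The two maxima therefore coincide, and substituting back yields $\min_{c\in C,\,c\neq 0}wt(c)=n-\max_{\Ac\subseteq\Nc,\,G_\Ac\neq\{0\}}|\Ac|$, as claimed.

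I do not expect a serious obstacle here: once the coordinate-vanishing criterion $g\in G_i$ is isolated, the argument is essentially bookkeeping. The only point requiring a little care is the double inequality establishing that maximizing $|\{i:g\in G_i\}|$ over group elements and maximizing $|\Ac|$ over index sets with nontrivial intersection $G_\Ac$ give the same value; both directions must be verified so that the optimal $g$ and the optimal $\Ac$ witness each other.
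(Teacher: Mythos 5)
Your proof is correct and follows essentially the same route as the paper's: both use the group structure of $C$ (Corollary~\ref{cor:abelian}) to reduce the minimum distance to the minimum weight of a nonzero codeword, and then identify the maximum number of zero coordinates of a codeword with the largest index set $\Ac$ whose intersection $G_\Ac$ is nontrivial. Your write-up simply makes explicit what the paper leaves implicit, namely the coordinate-vanishing criterion (the $i$th entry of the codeword for $g$ is the identity of $H_i$ iff $g\in G_i$) and the two-sided inequality matching the maximization over group elements with the maximization over index sets.
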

\begin{IEEEproof}
The minimum distance $\min_{c\neq c' \in C}|\{c_i \neq c'_i\}|$ can be written as $\min_{c\neq c' \in C}wt(c-c')$ since $c-c'$ makes sense. Furthermore, since $c-c'\in C$, it reduces, as for linear codes over finite fields, 
to $\min_{c\in C}wt(c)$, and we are left to find the weight of the codeword having maximum number of zeros. 
This corresponds to finding the maximum number of subgroups whose intersection contains a non-trivial element of $G$. 
\end{IEEEproof}

As an illustration, here is a simple family of abelian groups that generate $(n,k)$ linear codes over $\FF_p$.
\begin{lem}
The elementary abelian group $C_p\times C_p$ generates a $(p+1,2)$ linear code over $\FF_p$ with minimum distance $p$. 
\end{lem}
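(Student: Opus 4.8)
The plan is to realize $G = C_p \times C_p$ as the two-dimensional vector space $\FF_p^2$ over $\FF_p$, and to take as the subgroups $G_1,\ldots,G_{p+1}$ its one-dimensional subspaces, i.e. the lines through the origin. First I would count these. A nontrivial proper subgroup of $C_p\times C_p$ has order $p$ by Lagrange, hence is cyclic and generated by a single nonzero vector; two nonzero vectors generate the same subgroup if and only if one is a scalar multiple of the other. Partitioning the $p^2-1$ nonzero vectors into classes of size $p-1$ therefore yields exactly $(p^2-1)/(p-1)=p+1$ distinct subgroups of order $p$. This fixes the length $n=p+1$.

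Next I would pin down the remaining parameters of the code. Each $G_i$ has order $p$, so its index is $[G:G_i]=p^2/p=p$, and the quotient $G/G_i$ is a group of order $p$, that is $H_i\cong C_p\cong \FF_p$. By Proposition \ref{lem:two} the code then lives in $H_1\times\cdots\times H_{p+1}=\FF_p^{p+1}$ and is linear over $\FF_p$. To obtain the dimension I would compute $G_\Nc=\cap_{i=1}^{p+1}G_i$: since any two of these distinct lines meet only at the origin, $G_\Nc=\{0\}$, so $|G_\Nc|=1$ and Lemma \ref{lem:sizeC} gives $|C|=|G|=p^2$, whence $k=2$.

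Finally, for the minimum distance I would invoke Lemma \ref{lem:dmin}, which reduces the problem to computing $\max\{|\Ac|:\Ac\subseteq\Nc,\ G_\Ac\neq\{0\}\}$. The decisive observation, already used above, is that two distinct lines in $\FF_p^2$ intersect trivially, so $G_\Ac=\{0\}$ as soon as $|\Ac|\geq 2$, while $G_{\{i\}}=G_i\neq\{0\}$ for every singleton. Hence the maximum equals $1$, and the minimum distance is $n-1=(p+1)-1=p$.

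The computations are all routine; the single geometric fact doing the work is that the $p+1$ chosen subgroups are pairwise trivially intersecting. This one property simultaneously fixes the length as $p+1$, forces $G_\Nc$ to be trivial (so that $k=2$ rather than a proper quotient being needed), and drives the minimum distance down to $p$. I expect no genuine obstacle beyond stating this intersection property cleanly and citing Lemmas \ref{lem:sizeC} and \ref{lem:dmin} together with Proposition \ref{lem:two}.
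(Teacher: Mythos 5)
Your proposal is correct and follows essentially the same route as the paper: both take the $p+1$ order-$p$ subgroups (lines) of $C_p\times C_p$, note they have index $p$ and pairwise trivial intersection, and then read off the length, the $p^2$ codewords giving dimension $2$, linearity over $\FF_p$ via the quotients $H_i\cong C_p$, and the minimum distance $p$ from Lemma \ref{lem:dmin}. Your version merely makes explicit the citations to Lemma \ref{lem:sizeC}, Proposition \ref{lem:two}, and Lemma \ref{lem:dmin} that the paper leaves implicit, and counts the lines by partitioning nonzero vectors rather than listing generators.
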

\begin{IEEEproof}
The group $G=C_p\times C_p$ contains $p+1$ non-trivial subgroups, of the form $\langle (1,i)\rangle$ where $i=0,1,\ldots p-1$ and $\langle (0,1) \rangle$. They all have index $p$ and trivial pairwise intersection. We thus get a code of length $n=p+1$, containing $p^2$ codewords, which is linear over $\FF_p$ (by using that $C_p$ is isomorphic to the integers mod $p$).
Since the pairwise intersection of subgroups is trivial, the minimum distance is $p$. 
\end{IEEEproof}

We finish this section by providing a worked out example.

\begin{example}\rm
\label{exm:almost}
Consider the elementary abelian group $G=C_3 \times C_3 \simeq \{0,1,2\} \times \{0,1,2\}$ and the
four subgroups $G_1=\langle (1,0) \rangle = \{ (0,0), (1,0), (2,0) \}$, $G_2=\langle (0,1) \rangle = \{ (0,0), (0,1), (0,2) \}$, $G_3=\langle (1,1) \rangle = \{ (0,0), (1,1), (2,2) \}$, and  $G_4=\langle (1,2) \rangle = \{ (0,0), (1,2), (2,1) \}$.
Using the method of Section \ref{sec:group}, we obtain the following codewords (we write $ij$ instead of $(i,j)$ for brevity):
\begin{center}
\small{
\begin{tabular}{c|c|c|c|c|}
     & $\langle (10) \rangle$ & $\langle (01) \rangle$ & $\langle (11) \rangle$ & $\langle (12) \rangle$ \\
\hline
$\!\!\!(00)\!$ & $\langle (10) \rangle$ &  $\langle (01) \rangle$ & $\langle (11) \rangle$ & $\langle (12) \rangle$ \\
$\!\!\!(01)\!$ & $\!(01)(11)(21)\!$   &  $\langle (01) \rangle$ & $\!(01)(12)(20)\!$   & $\!(01)(10)(22)\!$  \\
$\!\!(02)\!$ & $\!(02)(12)(22)\!$   &  $\langle (01) \rangle$ & $\!(10)(21)(02)\!$   & $\!(02)(11)(20)\!$  \\
$\!\!\!(10)\!$ & $\langle (10) \rangle$  & $\!(10)(11)(12)\!$ & $\!(10)(21)(02)\!$   & $\!(01)(10)(22)\!$  \\
$\!\!\!(11)\!$ & $\!(01)(11)(21)\!$   & $\!(10)(11)(12)\!$  &$\langle (11) \rangle$ & $\!(02)(11)(20)\!$  \\
$\!\!\!(12)\!$ & $\!(02)(12)(22)\!$   &  $\!(10)(11)(12)\!$ & $\!(01)(12)(20)\!$  & $\langle (12) \rangle$ \\
$\!\!\!(20)\!$ &$\langle (10) \rangle$ &$\!(20)(21)(22)\!$ &$\!(01)(12)(20)\!$  & $\!(02)(11)(20)\!$ \\
$\!\!\!(21)\!$ &$\!(01)(11)(21)\!$ &$\!(20)(21)(22)\!$ &$\!(10)(21)(02)\!$ & $\langle (12) \rangle$ \\
$\!\!\!(22)\!$ &$\!(02)(12)(22)\!$ & $\!(20)(21)(22)\!$&$\langle (11) \rangle$  & $\!(01)(10)(22)\!$  \\
\end{tabular}}
\end{center}
Now let $H_1=G/\langle (10) \rangle, ~H_2 =G/\langle (01) \rangle,~ H_3= G/\langle (11) \rangle$ and $H_4= G/\langle (12) \rangle$. Note that $H_i \simeq C_3=\{0,1,2\}$ for all $i$.
If we replace the subgroups by their quotients in the above table, we get the following code from Lemma~\ref{lem:two}:

\begin{center}
\begin{tabular}{c|c|c|c|c|}
     & $\langle (10) \rangle$ & $\langle (01) \rangle$ & $\langle (11) \rangle$ & $\langle (12) \rangle$ \\
\hline
$\!\!\!(00)\!$ & $0$ & $0  $ & $0    $ & $0  $ \\
$\!\!\!(01)\!$ & $1$ & $0  $ & $1$ & $1$ \\
$\!\!(02)\!$   & $2$ & $0  $ & $2$ & $2$ \\
$\!\!\!(10)\!$ & $0$ & $1  $ & $2$ & $1$ \\
$\!\!\!(11)\!$ & $1$ & $1  $ & $0   $ & $2$ \\
$\!\!\!(12)\!$ & $2$ & $1 $ & $1$ & $0      $ \\
$\!\!\!(20)\!$ & $0$ & $2 $ & $1$ & $2$ \\
$\!\!\!(21)\!$ & $1$ & $2 $ & $2$ & $0   $ \\
$\!\!\!(22)\!$ & $2$ & $2 $ & $0 $ & $1$ \\
\end{tabular}
\end{center}
It is a ternary linear code of length $p+1=4$ and minimum distance $p=3$ with generator matrix
\[ \left( \begin{array}{cccc}
1 & 0 & 1 & 1\\
0 & 1 & 2 &1
\end{array} \right).
\]
Since $H(X_\Nc)=\log_q 9$, $H(X_\Ac)=\log_q 3$ when $|\Ac|=1$ and $H(X_\Ac)=\log_q 9$ if $|\Ac|\geq 2$,   
we have that $q^{H(X_\Nc)-H(X_\Ac)}=9$ when $\Ac$ is empty, $q^{H(X_\Nc)-H(X_\Ac)}=q^{\log_q(9/3)}=3$ when $|\Ac|=1$ and $1$ otherwise, so that 
the weight enumerator of $C$ is, using Theorem \ref{thm:wt}:
\begin{eqnarray*}
W_C(x,y) &=& 9 y^4 + {4 \choose 1}3 (x-y)y^3 \\
         &  & +{4 \choose 2}(x-y)^2y^2+ {4 \choose 3}(x-y)^3y+(x-y)^4\\
         &=& x^4 + 8xy^3,
\end{eqnarray*}
as is clearly the case.
\end{example}

%
%
\section{Quasi-Uniform Codes from Nonabelian Groups}
\label{sec:nonab}

Suppose now that $G$ is a nonabelian group. The resulting quasi-uniform codes might end up 
being very different depending on the nature of the subgroups considered. We next treat the different possible cases that can occur.

\subsection{The Case of Quotient Groups}

If $G$ is a nonabelian group, but $G_1,\ldots,G_n$ are normal subgroups, then the intersection $G_\Nc$ of all subgroups $G_1,\ldots,G_n$ is a normal subgroup. Following Lemma \ref{lem:sizeC}, we can further consider the quotient $G/G_\Nc$. 
As a result, some nonabelian groups are really reduced to abelian ones. This is the case of some dihedral groups. Let 
\[
D_{2m}=\langle r,s~|~r^m=s^2=1,~rs=sr^{-1} \rangle
\]
be the dihedral group of order $2m$.
\begin{lem}\label{lem:dih}
Quasi-uniform codes obtained from dihedral groups whose order is a power of 2 and some (possiby all) of their normal subgroups are obtained from abelian groups.
\end{lem}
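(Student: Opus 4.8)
The plan is to exploit the fact, already visible from the weight enumerator (\ref{thm:wt}), that a quasi-uniform code built from $(G;G_1,\dots,G_n)$ depends on the groups only through the intersection sizes: since $q^{H(X_\Nc)-H(X_\Ac)}=|G_\Ac|/|G_\Nc|$ for every $\Ac\subseteq\Nc$, any two group constructions yielding the same function $\Ac\mapsto|G_\Ac|$ produce codes with identical parameters. Hence, to show that the code from a dihedral $2$-group is ``obtained from an abelian group'' it suffices to exhibit an abelian group $A$ with subgroups $A_1,\dots,A_n$ for which $|A_\Ac|=|G_\Ac|$ for all $\Ac$; the resulting code is then genuinely abelian by Corollary \ref{cor:abelian}. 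As usual I would first pass to the quotient $G/G_\Nc$ as in Lemma \ref{lem:sizeC}, so that I may concentrate on the intersection pattern of the chosen normal subgroups themselves.

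First I would pin down the lattice of normal subgroups of $D_{2m}$ when $m=2^{k-1}$. These are exactly: the chain of rotation subgroups $\langle r^{2^j}\rangle$, $j=0,\dots,k-1$, of orders $m,m/2,\dots,1$, all lying inside $\langle r\rangle$; when $k\ge 3$, the two index-$2$ subgroups $\langle r^2,s\rangle$ and $\langle r^2,rs\rangle$, each of order $m$; and $D_{2m}$ itself. The only nonobvious relations are that $\langle r^2,s\rangle$ and $\langle r^2,rs\rangle$ both sit above $\langle r^2\rangle$ and that $\langle r\rangle\cap\langle r^2,s\rangle=\langle r\rangle\cap\langle r^2,rs\rangle=\langle r^2,s\rangle\cap\langle r^2,rs\rangle=\langle r^2\rangle$; this is where the commutator computation $srs^{-1}=r^{-1}$ (giving $[D_{2m},D_{2m}]=\langle r^2\rangle$ and $D_{2m}/\langle r^2\rangle\cong C_2\times C_2$) enters. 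The upshot is a graded lattice: a chain below $\langle r^2\rangle$, topped by three mutually intersecting index-$2$ subgroups $\langle r\rangle,\langle r^2,s\rangle,\langle r^2,rs\rangle$ meeting in $\langle r^2\rangle$, and then $D_{2m}$.

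Then I would match this against the abelian group $A=C_2\times C_m=\langle a\rangle\times\langle b\rangle$ with $|a|=2$, $|b|=m$, via $r\mapsto b$, $s\mapsto a$, $rs\mapsto ab$. Explicitly $\langle r^{2^j}\rangle\mapsto\langle b^{2^j}\rangle$, $\langle r^2,s\rangle\mapsto\langle a,b^2\rangle$, $\langle r^2,rs\rangle\mapsto\langle ab,b^2\rangle$, and $D_{2m}\mapsto A$. I would check that this is an order- and cardinality-preserving bijection of the two lattices, the crucial point being that the three index-$2$ subgroups $\langle b\rangle,\langle a,b^2\rangle,\langle ab,b^2\rangle$ of $A$ pairwise intersect in $\langle b^2\rangle$, exactly matching the dihedral side. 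Because meet equals intersection in a subgroup lattice, preservation of both order and cardinality forces $|A_\Ac|=|G_\Ac|$ for every sub-collection, giving the desired conclusion.

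The main obstacle I anticipate is bookkeeping rather than conceptual: correctly enumerating the normal subgroups (in particular, ruling out further normal subgroups containing a reflection, which the conjugation identity $r(r^is)r^{-1}=r^{i+2}s$ achieves by forcing $r^2\in H$), and verifying that the three pairwise intersections on the abelian side equal $\langle b^2\rangle$ rather than collapsing to something larger or smaller. One degenerate case to dispatch at the outset is $k\le 2$, where $D_{2m}$ is already abelian ($D_4\cong C_2\times C_2$) and nothing needs to be done.
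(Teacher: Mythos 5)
Your group-theoretic bookkeeping is sound (the classification of normal subgroups of $D_{2m}$, the lattice of $C_2\times C_m$, and the matching of all intersection cardinalities all check out), but the reduction in your first paragraph does not prove what the lemma asserts. Matching the function $\Ac\mapsto|G_\Ac|$ only guarantees that the two constructions have the same entropy profile, hence the same weight enumerator via (\ref{thm:wt}); that is precisely the statement that $(D_{2m},G_1,\dots,G_n)$ is \emph{abelian group representable} in the sense of \cite{ENF}, a result the paper already cites. It does not show that the code built from the dihedral group \emph{is} a code built from an abelian group: two codes can have identical length, size and weight enumerator and still be different (inequivalent) codes. The paper is explicit about this distinction, remarking that Lemma \ref{lem:dih} is \emph{stronger} than the abelian representability of dihedral $2$-groups proved in \cite{ENF} ``in that it shows that the code alphabet will be the same.'' Your appeal to Corollary \ref{cor:abelian} only tells you that the code built from $A=C_2\times C_m$ is an abelian group; it transfers nothing back to the code built from $D_{2m}$, so the conclusion is a non sequitur under the strong reading that the lemma (and the paper's proof of it) intends.

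The gap is fixable, and your dictionary already contains the missing ingredient: upgrade the subgroup correspondence $r\mapsto b$, $s\mapsto a$ to the element-wise bijection $\Phi:D_{2m}\to C_2\times C_m$, $r^js^\epsilon\mapsto a^\epsilon b^j$, and verify that $\Phi(gG_i)=\Phi(g)A_i$ for every normal subgroup $G_i$ and every $g$; for instance the cosets of $\langle r^2,rs\rangle$ are $\{r^{2j},r^{2j+1}s\}$ and $\{r^{2j+1},r^{2j}s\}$, which $\Phi$ carries exactly onto the two cosets of $\langle ab\rangle$. Then $(gG_1,\dots,gG_n)\mapsto(\Phi(g)A_1,\dots,\Phi(g)A_n)$ identifies the two codebooks coordinatewise, i.e.\ the dihedral code literally \emph{is} the abelian code after relabeling symbols, which is the strong statement required. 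Note that even with this repair your route is genuinely different from the paper's: the paper never compares $D_{2m}$ with a second group, but instead quotients by $G_\Nc$ (which leaves the code unchanged by Lemma \ref{lem:sizeC}) and iterates inside the dihedral family until it reaches the abelian group $D_4$, so that equality of codes holds by construction rather than by exhibiting a coset-preserving bijection.
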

\begin{IEEEproof}
Normal subgroups $H$ of $D_{2m}$ are known: $H$ is either a subgroup of $\langle r \rangle$, or $2|m$ and $H$ is one of the two maximal subgroups of index 2 $\langle r^2,s\rangle$, $\langle r^2,rs\rangle$. Since $m$ is a power of 2, the intersection $G_\Nc$ of (any choice of) these normal subgroups is necessarily a subgroup of order some power of 2, and we can take the quotient $D_{2m}/G_\Nc$, which is a dihedral group of smaller order also a power of 2. But then its normal subgroups will be of the same form, and the same process can be iterated, until we reach $D_4$ which is abelian.
\end{IEEEproof}

The case when $G$ is nonabelian and all the subgroups $G_1,\ldots,G_n$ are normal but some (possibly all) of the quotient groups $G/G_i$ are nonabelian is very interesting. 
Indeed, in that case, Proposition \ref{lem:two} still holds (with $H_i$ nonabelian), and in fact, the corresponding quasi-uniform code still has a group structure, but that of an nonabelian group.
This gives the opportunity to go beyond abelian structures, and yet to keep a group structure.
As recalled above, we may assume wlog that $|G_\Nc|=1$.

\begin{lem}
A quasi-uniform code C obtained from a nonabelian group $G$ with normal subgroups $G_1,\ldots,G_n$ where at least one quotient group $G/G_i$ is nonabelian forms a nonabelian group.
\end{lem}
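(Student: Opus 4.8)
The plan is to mirror the argument of Corollary~\ref{cor:abelian}, but to track the group operation multiplicatively and to verify that the componentwise map $g \mapsto (\psi_1(\pi_1(gG_1)),\ldots,\psi_n(\pi_n(gG_n)))$ remains a group homomorphism even when some $H_i$ are nonabelian. By Proposition~\ref{lem:two}, each coordinate is obtained by composing the canonical projection $\pi_i\colon G \to G/G_i$ with the isomorphism $\psi_i\colon G/G_i \to H_i$; since every $G_i$ is normal, each $\pi_i$ is a genuine group homomorphism, so each coordinate map $\phi_i := \psi_i \circ \pi_i\colon G \to H_i$ is a homomorphism. The code $C$ is then precisely the image of the product homomorphism $\Phi := (\phi_1,\ldots,\phi_n)\colon G \to H_1 \times \cdots \times H_n$.

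First I would observe that $C = \Phi(G)$ is a subgroup of the direct product $H_1 \times \cdots \times H_n$, because the image of a group under a homomorphism is a subgroup. Concretely, the identity codeword $(\phi_1(e),\ldots,\phi_n(e)) = (e_1,\ldots,e_n)$ lies in $C$; the product of two codewords $\Phi(g)$ and $\Phi(g')$ equals $\Phi(gg') \in C$, computed coordinatewise as $\phi_i(g)\phi_i(g') = \phi_i(gg')$; and the inverse of $\Phi(g)$ is $\Phi(g^{-1}) \in C$. This establishes the group structure of $C$ without any appeal to commutativity, and it shows at once that the length-$n$ codewords are closed under the induced multiplication.

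Next I would argue that $C$ is genuinely nonabelian, which is the point of the statement. Since at least one quotient $G/G_{i_0}$ is nonabelian, fix two elements $aG_{i_0}, bG_{i_0}$ of $G/G_{i_0}$ that do not commute. Then in the $i_0$-th coordinate the elements $\phi_{i_0}(a) = \psi_{i_0}(aG_{i_0})$ and $\phi_{i_0}(b) = \psi_{i_0}(bG_{i_0})$ fail to commute, because $\psi_{i_0}$ is an isomorphism and hence preserves the failure of commutativity. Consequently the codewords $\Phi(a)$ and $\Phi(b)$ differ from $\Phi(ab)$ and $\Phi(ba)$ in that single coordinate, so $\Phi(a)\Phi(b) \neq \Phi(b)\Phi(a)$ and $C$ is nonabelian.

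The main obstacle is essentially bookkeeping rather than a conceptual difficulty: one must be careful that the ambient operation on $C$ is the coordinatewise product on $H_1 \times \cdots \times H_n$ and that this is the \emph{same} operation under which $C$ inherits its structure, so that the nonabelian witness in coordinate $i_0$ is not washed out by relations imposed in the other coordinates. Since distinct coordinates of the direct product multiply independently, a noncommuting pair in one coordinate forces noncommutativity of the whole codewords, and no such washing-out can occur. I would therefore expect the proof to be short, with the only care needed being the explicit verification that $\Phi$ is a homomorphism (which rests on normality of each $G_i$) and that an isomorphism transports noncommutativity faithfully.
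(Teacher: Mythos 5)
Your proof is correct and follows essentially the same route as the paper: the paper's own proof is a single remark saying the argument is identical to that of Corollary~\ref{cor:abelian}, with the componentwise group law written multiplicatively and the identity codeword $(1_{H_1},\ldots,1_{H_n})$ in place of the zero codeword. Your packaging via the product homomorphism $\Phi=(\phi_1,\ldots,\phi_n)$, together with the fact that the image of a homomorphism is a subgroup of $H_1\times\cdots\times H_n$, is a cleaner formulation of that same componentwise verification, with normality of each $G_i$ doing exactly the work the paper implicitly relies on. Where you go beyond the paper is the final step: the paper never actually argues that $C$ is \emph{nonabelian}, whereas you produce a witness --- noncommuting cosets $aG_{i_0},bG_{i_0}$ in the nonabelian quotient, transported faithfully by the isomorphism $\psi_{i_0}$, and lifted to codewords via the surjectivity of $\pi_{i_0}$, giving $\Phi(ab)\neq\Phi(ba)$. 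Since the group structure alone is just Corollary~\ref{cor:abelian} again, this witness is precisely what the statement needs, so your version is the more complete one. One small wording slip: the sentence claiming $\Phi(a)$ and $\Phi(b)$ ``differ from $\Phi(ab)$ and $\Phi(ba)$ in that single coordinate'' should simply say that $\Phi(ab)$ and $\Phi(ba)$ differ in (at least) the $i_0$-th coordinate; the conclusion you draw from it is the correct one.
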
 
The proof is identical to that of Corollary \ref{cor:abelian}, but one has to be cautious that the group law is not commutative. The group law $*$ is defined componentwise, and the identity element is the codeword $(1_{H_1},\ldots,1_{H_n})$.

It is then possible to mimic the definition of minimum distance. The weight of a codeword is then the number of components which are not an identity element. The identity codeword plays the role of the whole zero codeword. The minimum distance $\min_{c\neq c'\in C}|\{c_i \neq c'_i\}|$ is 
then $\min_{c\neq c'\in C}wt(c*(c')^{-1})$. Indeed, $(c')^{-1}=((c'_1)^{-1},\ldots,(c'_n)^{-1})$, and noncommutativity is not an issue, since every inverse componentwise is both a left and a right inverse. This gives a counterpart to Lemma \ref{lem:dmin}.

\begin{cor}\label{cor:dmin}
The minimum distance $\min_{c\in C}wt(c)$ of a quasi-uniform code $C$ generated by a nonabelian group $G$ and its normal subgroups $G_i$, $i=1,\ldots n$ is $n-\max_{\Ac\in \Nc,G_\Ac \neq \{0\}}|\Ac|$, where the weight $wt(c)$ is understood as the number of components which are not an identity element in some $H_i$.
\end{cor}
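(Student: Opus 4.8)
The plan is to mirror the proof of Lemma~\ref{lem:dmin} step by step, being careful that the additive notation there is replaced by the (noncommutative) multiplicative group law $*$ on $C$. First I would invoke the preceding discussion, which established that $C$ is a group under the componentwise operation $*$ with identity codeword $(1_{H_1},\ldots,1_{H_n})$, and that each componentwise inverse is simultaneously a left and right inverse. This is what makes the weight $wt(c)$ well defined as the number of coordinates $c_i \neq 1_{H_i}$, and what lets me rewrite the distance $\min_{c\neq c'\in C}|\{i : c_i \neq c'_i\}|$ as $\min_{c\neq c'\in C}wt(c*(c')^{-1})$.

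Next I would reduce the minimum distance to the minimum nonzero weight. The key observation is that $c_i \neq c'_i$ in $H_i$ if and only if the $i$th coordinate of $c*(c')^{-1}$, namely $c_i * (c'_i)^{-1}$, is not the identity $1_{H_i}$. Hence the number of disagreeing coordinates between $c$ and $c'$ equals $wt(c*(c')^{-1})$. Since $C$ is closed under $*$, as $c'$ ranges over $C\setminus\{c\}$ the element $c*(c')^{-1}$ ranges over all of $C$ minus the identity codeword, so $\min_{c\neq c'\in C}wt(c*(c')^{-1}) = \min_{c\in C,\, c\neq 1}wt(c)$. This is the noncommutative analogue of the distance-invariance argument and uses only the group axioms, not commutativity.

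Finally I would translate minimum weight into the combinatorics of subgroup intersections. By Proposition~\ref{lem:two} (in its nonabelian form, as noted after Lemma~\ref{lem:dih}), the codeword associated to $g\in G$ has $i$th coordinate $\psi_i(\pi_i(gG_i))$, which equals the identity of $H_i$ exactly when $gG_i = G_i$, that is when $g\in G_i$. Therefore the set of identity coordinates of the codeword for $g$ is $\{i : g\in G_i\}$, and the weight of that codeword is $n$ minus the size of this set. Maximizing the number of identity coordinates over nonidentity $g$ is exactly choosing the largest $\Ac\subseteq\Nc$ for which some $g\neq 1$ lies in $\bigcap_{i\in\Ac}G_i = G_\Ac$, i.e.\ for which $G_\Ac \neq \{1\}$. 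This yields $\min_{c\in C}wt(c) = n - \max_{\Ac\subseteq\Nc,\, G_\Ac\neq\{1\}}|\Ac|$, completing the proof.

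I do not expect a genuine obstacle here, since the argument is structurally identical to Lemma~\ref{lem:dmin}; the only point demanding care is verifying that each step survives the loss of commutativity. The two places to watch are the rewriting $|\{i:c_i\neq c'_i\}| = wt(c*(c')^{-1})$ and the claim that $c*(c')^{-1}$ sweeps out all of $C$, both of which rely solely on $C$ being a group with two-sided inverses; commutativity is never invoked. The identification $g\in G_i \iff \psi_i(\pi_i(gG_i))=1_{H_i}$ is likewise purely a statement about cosets and so is unaffected.
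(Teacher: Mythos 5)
Your proposal is correct and follows essentially the same route as the paper: the paper establishes the corollary via the discussion preceding it (rewriting the distance as $wt(c*(c')^{-1})$ using the componentwise group law and two-sided inverses) combined with the argument of Lemma~\ref{lem:dmin} (reducing minimum distance to minimum weight and then to the largest subgroup intersection containing a non-trivial element). Your write-up merely makes explicit the steps the paper leaves implicit, such as the bijection $c'\mapsto c*(c')^{-1}$ and the identification $g\in G_i \iff \psi_i(\pi_i(gG_i))=1_{H_i}$, with no substantive difference in approach.
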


\begin{figure}
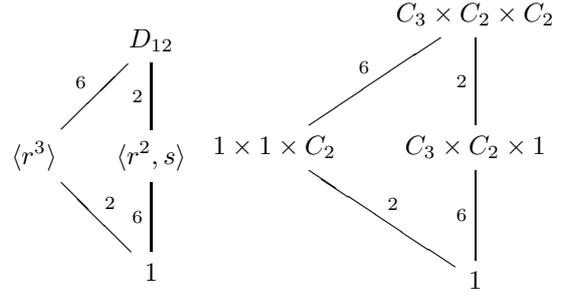

\[
\begin{diagram}
\node{} \node{D_{12}} \\
\node{\langle r^3 \rangle} \arrow{ne,l,-}{6} \node{\langle r^2,s \rangle}\arrow{n,l,-}{2} \\
\node{} \node{1}\arrow{nw,l,-}{2}\arrow{n,l,-}{6}\\
\end{diagram} 
\begin{diagram}
\node{} \node{C_3\times C_2\times C_2} \\
 \node{1\times 1 \times C_2} \arrow{ne,l,-}{6}\node{C_3\times C_2 \times 1}\arrow{n,l,-}{2} \\
\node{} \node{1}\arrow{nw,l,-}{2}\arrow{n,l,-}{6}\\
\end{diagram}
\]
\vspace{-1.7cm}
\caption{\label{fig:d12}
On the right, the dihedral group $D_{12}$, and on the left, the abelian group $C_3\times C_2 \times C_2$, both with some of their subgroups. 
}
\end{figure}

\begin{example}\rm
The dihedral group $D_{12}$ has two normal subgroups $G_1=\langle r^3 \rangle$ and $G_2=\langle r^2,s\rangle$ with trivial intersection (see Figure \ref{fig:d12}). We can create quasi-uniform codes of length $n$ by choosing $n-2$ other normal subgroups. Since $D_{12}/G_1 \simeq D_6$, this gives a nonabelian quotient $H_1$.
\end{example}

This example illustrates the difference between an information theoretic view of these codes, where one focuses on the joint entropy of the corresponding quasi-uniform codes, and a coding perspective, where the actual code, its alphabet, and its structure are of interest. We observe on Figure 
\ref{fig:d12} that if we care about having two (normal) subgroups $G_1,G_2$ with respective order 2 and 6, in a group of order 12, we could have done that with the abelian group $C_3\times C_2 \times C_2$. The difference is in the alphabet: $D_{12}/G_1$ is a nonabelian group, while $C_3\times C_2\times C_2/1\times 1\times C_2$ is abelian. This question of distinguishing nonabelian groups whose entropic vectors can or cannot be obtained from abelian groups was addressed more generally in \cite{ENF}.

\begin{defn}\cite{ENF}
Let $G$ be a nonabelian group and let $G_1, \ldots, G_n$ be fixed subgroups of $G$.
Suppose there exists an abelian group $A$  with subgroups $A_1, \ldots, A_n$ such that for every non-empty $\Ac \subseteq\Nc$, $[G:G_\Ac] = [A: A_\Ac]$. Then we say that
$(A, A_1, \dots, A_n)$ {\it{represents}} $(G, G_1, \ldots, G_n)$.
\end{defn}

It follows immediately that if $(A,A_1,\ldots,A_n)$ represents $(G,G_1,\ldots,G_n)$, then the quasi-uniform codes generated by both groups have the same joint entropy, and in turn the same weight enumerator, by Theorem \ref{thm:wt}. This is the case for dihedral and quasi-dihedral 2-groups as well as dicyclic 2-groups, which are abelian group representable for any number of subgroups, and all nilpotent groups for $n=2$~\cite{ENF}. 

If $(G,G_1,\ldots,G_n)$ cannot be abelian represented, then we can build a quasi-uniform code using exactly the same subgroups $G_1,\ldots,G_n$ and be sure that its weight enumerator cannot be obtained from an abelian group.
It was also shown in \cite{ENF} that dihedral groups whose order is a power of 2 are abelian group representable, however Lemma \ref{lem:dih} is stronger, in that it shows that the code alphabet will be the same. 
\subsection{The Case of Nonnormal Subgroups}

Let $G$ be a nonabelian group, with subgroups $G_1,\ldots,G_n$, where some (possibly all) of the subgroups are not normal. In that case, we lose the group structure on the set of cosets.
To start with, Lemma \ref{lem:sizeC} will still hold, since it does not depend on the normality of the subgroups, however in general, we cannot take the quotient by $G_\Nc$ anymore, and the copy of $C$ we are left with does not have a group structure. This is an example of situations mentioned in the introduction, where the minimum distance is then not of interest.

\begin{example}\rm
Consider the group $S_3=\{ (),(12),(13),(23),(123),(132) \}$ of permutations on three elements described in cycle notation.
The corresponding quasi-uniform code is: 
\begin{center}
\begin{tabular}{c|c|c|c|}
     & $\langle (12) \rangle$ & $\langle (13) \rangle$ & $\langle (23) \rangle$ \\
\hline
$()$   & $\langle (12) \rangle$ & $\langle (13) \rangle$ & $\langle (23) \rangle$ \\
$(12)$ & $\langle (12) \rangle$ & $(12),(132)$ & $(12),(123)$ \\
$(13)$ & $(13),(123)$& $\langle (13) \rangle$& $(13),(132)$ \\
$(23)$ &$(23),(132)$ & $(23),(123)$ & $\langle (23) \rangle$ \\
$(123)$& $(13),(123) $ &$(23),(123) $ &$(12),(123) $ \\
$(132)$&$(23),(132) $ & $ (12),(132) $& $(13),(132)$ \\
\end{tabular}
\end{center}
Note that we could label the cosets using integers, however then one should keep in mind that 
these integers do not have any algebraic meaning.
\end{example}

Losing the group structure of the code however has the advantage that we have more flexibility in choosing the subgroups we deal with, and thus have more choices in terms of possible intersections that we are getting. 

\begin{pro}
There exist a quasi-uniform code obtained from a non-nilpotent group which cannot be obtained by any abelian group. 
\end{pro}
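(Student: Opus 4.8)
The goal is to exhibit a concrete non-nilpotent group together with a choice of subgroups whose entropic data (the index profile $[G:G_\Ac]$ for all $\Ac\subseteq\Nc$) cannot be matched by any abelian group with any subgroups. The natural candidate is the smallest non-nilpotent group, the symmetric group $S_3$, which already appeared in the preceding example. The plan is to take $G=S_3$ together with its three order-2 subgroups $G_1=\langle(12)\rangle$, $G_2=\langle(13)\rangle$, $G_3=\langle(23)\rangle$, compute the full index profile, and then argue by contradiction that no abelian group $A$ with subgroups $A_1,A_2,A_3$ can represent $(S_3,G_1,G_2,G_3)$ in the sense of the representability definition.

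First I would record the relevant indices. Each $G_i$ has order $2$, so $[G:G_i]=3$. The pairwise intersections are trivial, $G_i\cap G_j=\{()\}$ for $i\neq j$, giving $[G:G_{\{i,j\}}]=|G|=6$, and likewise $[G:G_{\{1,2,3\}}]=6$. Now suppose an abelian $(A,A_1,A_2,A_3)$ represents this. Matching the singletons forces $[A:A_i]=3$, and matching any pair forces $[A:A_i\cap A_j]=6$. The key structural fact I would exploit is that in an abelian group all subgroups are normal, so $A_iA_j$ is a subgroup and the product formula $|A_iA_j|=|A_i||A_j|/|A_i\cap A_j|$ applies. From $[A:A_i]=3$ and $[A:A_i\cap A_j]=6$ one gets $[A_i:A_i\cap A_j]=2$, whence $[A:A_iA_j]=[A:A_i]/[A_i:A_i\cap A_j]=3/2$, which is not an integer. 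This contradiction shows no abelian representation exists.

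The main obstacle, and the step that deserves care, is making sure the noninteger index is genuinely forced rather than an artifact of one particular matching. The cleanest way is to isolate the purely abelian consequence: whenever $A$ is abelian, $[A:A_iA_j]=[A:A_i][A:A_j]/[A:A_i\cap A_j]$, so the representing profile must satisfy $[A:A_i\cap A_j]\mid [A:A_i][A:A_j]$. Here $[A:A_i][A:A_j]=9$ while $[A:A_i\cap A_j]=6$, and $6\nmid 9$, so the divisibility fails for every abelian candidate. This divisibility criterion is exactly the arithmetic obstruction that abelian groups satisfy but $S_3$ with these nonnormal subgroups violates, and $S_3$ being non-nilpotent (it is the smallest such group) makes it the desired witness.

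It only remains to check that $S_3$ is indeed not nilpotent, which is standard: a finite group is nilpotent iff it is the direct product of its Sylow subgroups, equivalently all Sylow subgroups are normal; in $S_3$ the three Sylow $2$-subgroups $G_1,G_2,G_3$ are distinct and hence none is normal, so $S_3$ is not nilpotent. Assembling these pieces—the index profile of $(S_3,G_1,G_2,G_3)$, the product-formula divisibility obstruction valid for all abelian groups, and the failure $6\nmid 9$—completes the proof that this quasi-uniform code arises from a non-nilpotent group yet cannot be obtained from any abelian group.
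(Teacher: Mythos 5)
Your proof is correct, but it follows a genuinely different route from the paper's. The paper's proof is essentially a citation: it invokes the result of \cite{ENF} that a non-nilpotent group is not abelian group representable, the underlying reason being the existence of a non-normal Sylow subgroup. You instead exhibit an explicit witness, $S_3$ with its three Sylow $2$-subgroups (exactly the configuration tabulated in the paper's preceding example), and derive the obstruction by hand: any abelian $(A,A_1,A_2,A_3)$ matching the profile $[A:A_i]=3$, $[A:A_i\cap A_j]=6$ would force $A_iA_j$ to be a subgroup of index $[A:A_i][A:A_j]/[A:A_i\cap A_j]=9/6=3/2$, contradicting Lagrange. The logic is sound: the product formula $|A_iA_j|=|A_i||A_j|/|A_i\cap A_j|$ holds for arbitrary finite subgroups, abelianness is needed only to guarantee that $A_iA_j$ is a subgroup, and matching the singleton and pairwise indices is indeed forced by the representability definition, so the contradiction is not an artifact of a particular matching; your verification that $S_3$ is non-nilpotent is also correct. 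What your approach buys is self-containedness and explicitness: the existential claim is witnessed by a concrete length-$3$ code with an elementary arithmetic obstruction, with no appeal to the external representability theorem. What the paper's approach buys is generality: it shows that \emph{every} non-nilpotent group, with a suitable choice of subgroups, yields such a code, which matters for the paper's broader program of finding codes that genuinely escape abelian structures. One tiny notational slip in your write-up: you assert $[A:A_iA_j]=[A:A_i]/[A_i:A_i\cap A_j]$, whereas the correct factor is $[A_iA_j:A_i]=[A_j:A_i\cap A_j]$; since here $[A_j:A_i\cap A_j]=[A_i:A_i\cap A_j]=2$ by symmetry, the conclusion is unaffected.
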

\begin{IEEEproof}
Let $G$ be a non-nilpotent group. Then $G$ is not abelian group representable for all $n$~\cite{ENF}. That is, there exists some subgroups $G_i$ such that $[G:G_\Ac] \ne [A:A_\Ac]$ for some abelian group and subgroups. The proof uses the fact that in a group which is not nilpotent, there exists a Sylow subgroup which is not normal.
\end{IEEEproof}

%
%

\section{Almost Affine Codes from Groups}
\label{sec:aa}

Consider a non-empty subset $\Ac$ of $\Nc$ and let $C_\Ac$ be the projection of the code $C$ into the coordinate space $\Ac$, that is, all the words of $C$ are restricted to the positions in $\Ac$.
\begin{defn}~\cite{SA}
A $q$-ary code $C$ of length $n$ is said to be {\it almost affine} if it satisfies the condition  
\[
\log_q(|C_\Ac|) \in \mathbb{N},~\mbox {for all }\Ac \subseteq \Nc.
\] 
\end{defn}
Almost affine codes were introduced in \cite{SA} as a generalization of affine codes, which are themselves generalizations of linear codes over finite fields.
It was shown in \cite{CGB} that almost affine codes are quasi-uniform. It is thus natural to look for such codes among codes built from groups. Because of the definition of almost affine codes, $p$-groups are the first candidates that come to mind.

\begin{lem}
Let $G$ be a $p$-group, and let $G_1,\ldots,G_n$ be subgroups of index $p$. 
The corresponding quasi-uniform code is almost affine.
\end{lem}
\begin{IEEEproof}
First note that $G_1,\ldots,G_n$ are normal subgroups of $G$, thus so is their intersection, 
and wlog we may assume that $|G_\Nc|=1$. Then Proposition \ref{lem:two} holds (though the $H_i$ might be nonabelian), and we obtain a $p$-ary quasi-uniform code. Since any intersection $G_\Ac$ will have order a power of $p$, the code obtained is almost affine.
\end{IEEEproof}
There are other ways to get almost affine codes from $p$-groups, and
$p$-groups are not the only finite groups that can provide almost affine codes.

%
%

\section{Conclusion}

Quasi-uniform codes were known to be constructed from groups. 
In this paper, we were interested in relating the properties of the obtained code 
as a function of the corresponding group. We determined the size of the code, 
its alphabet, and its minimum distance, both for abelian groups, but also for some nonabelian 
groups where the group structure allows to mimic the definition of minimum distance. 
An application to the design of almost affine codes is also given. 

Current and future works involve studying further properties of these codes coming from groups, 
and in particular (1) codes coming from nonabelian groups which cannot be reduced to abelian groups, and (2) almost affine codes. The information theoretic point of view is also of course of interest: it is related to the understanding of entropic vectors coming from nonabelian groups. A long term goal is the design of non-linear network codes.


\section*{Acknowledgment}

The work of E. Thomas and F. Oggier is supported by the Nanyang Technological University under Research Grant M58110049. 


\end{document}